\newtheorem{thm}{Theorem}[section]
\newtheorem{lem}[thm]{Lemma}
\newtheorem{exa}[thm]{Example}
\newtheorem{dfn}[thm]{Definition}
\newtheorem{rem}[thm]{Remark}
\numberwithin{equation}{section}
\begin{document}

\title[]{Remarks on the invariants valued in the generalization of Conway algebra}

\author{Seongjeong Kim}

\address{Department of Fundamental Sciences, Bauman Moscow State Technical University, Moscow, Russia \\
ksj19891120@gmail.com}

%\author{V.O.Manturov}

%\address{Chelyabinsk State University and Bauman Moscow State Technical University, Moscow, Russia \\
%vomanturov@yandex.ru}
\subjclass{57M25}%

%\date{\today}%
%\dedicatory{}%
%\commby{}%
% ----------------------------------------------------------------
\begin{abstract}
In~\cite{Kim} the author generalized the Conway algebra and constructed the invariant valued in the generalized Conway algebra defined by applying two skein relations to crossings, which is called a generalized Conway type invariant. The generalized Conway type invariant is a generalization of Homflypt polynomial.

In this paper we show that an example of links, which have the same value of Homflypt polynomial, but have different values of the generalized Conway type invariant. We study a properties of Conway type invariant related to Vassiliev invariant. In section 3 we discuss about further researches.
\end{abstract}

\maketitle

\section{Introduction}
In \cite{PrzytyskiTraczyk} J.H.Przytycki and P.Traczyk introduced an algebraic structure, called {\it a Conway algebra,} and constructed an invariant of oriented links, which is a generalization of the Homflypt polynomial invariant. In 2017 L. H. Kauffman and S. Lambropoulou~\cite{KauffmanLambropoulou} constructed new 4-variable polynomial invariants, which are generalized from the Homflypt polynomial, the Dubrovnik polynomial and the Kauffman polynomial. In~\cite{Kim} the author generalized the Conway algebra and constructed the invariant valued in the generalized Conway algebra, which is constructed by applying two skein relations to crossings.

In this paper we show that an example of links, which have the same value of Homflypt polynomial, but have different values of the generalized Conway type invariant. We study a properties of Conway type invariant related to Vassiliev invariant. In section 3 we discuss about further researches.

\section{Some remarks on the Conway type invariant}
We introduce a generalization of the Conway algebra and invariant valued in the generalized Conway algebra.

\begin{dfn}\cite{Kim}\label{def_genConaltype1}
Let $\widehat{\mathcal{A}}$ be a set with four binary operations  $\circ,*,/$ and $//$ on $\widehat{\mathcal{A}}$. Let $\{a_{n}\}_{n=1}^{\infty} \subset \widehat{\mathcal{A}}$. The hexuple $( \widehat{\mathcal{A}}, \circ,/,*, //,\{a_{n}\}_{n=1}^{\infty})$ is called {\it a generalized Conway algebra} if it satisfies the following conditions:
\begin{description}
\item[(A)] $(a \circ b) / b = (a / b) \circ b = a = (a* b) // b  = (a // b) * b$ for $a,b \in  \widehat{\mathcal{A}}$,
\item[(B)] $ a_{n} = a_{n} \circ a_{n+1}$ for $n = 1,2,\cdots,$
\item[(C)] $(a \circ b) \circ (c \circ d) = (a \circ c) \circ (b \circ d)$ for $a,b,c,d \in  \widehat{\mathcal{A}}$,
\item[(D)] $(a * b) * (c * d) = (a * c) * (b * d)$ for $a,b,c,d \in  \widehat{\mathcal{A}}$,
\item[(E)] $(a \circ b) \circ (c * d) = (a \circ c) \circ (b * d)$ for $a,b,c,d \in  \widehat{\mathcal{A}}$,
\item[(F)] $(a * b) * (c \circ d) = (a * c) * (b \circ d)$ for $a,b,c,d \in  \widehat{\mathcal{A}}$,
\item[(G)] $(a \circ b) * (c \circ d) = (a * c) \circ (b * d)$ for $a,b,c,d \in  \widehat{\mathcal{A}}$.
\end{description}

\end{dfn}

\begin{rem}
Let $( \widehat{\mathcal{A}}, \circ,/,*,//,\{a_{n}\}_{n=1}^{\infty})$ be a generalized Conway algebra. If two operations $\circ$ and $*$ are same, then the generalized Conway algebra is a Conway algebra, and hence the Conway type invariant can be defined on $(\widehat{\mathcal{A}}, \circ, /,\{a_{n}\}_{n=1}^{\infty})$.
\end{rem}

\begin{thm}\cite{Kim}\label{Main_thm}
Let $\mathcal{L}$ be the set of equivalence classes of oriented link diagrams modulo Reidemeister moves. Let $(\widehat{\mathcal{A}}, \circ,/,*, //,\{a_{n}\}_{n=1}^{\infty})$ be a generalized Conway algebra. Then there uniquely exists the invariant of classical oriented links $\widehat{W} : \mathcal{L} \rightarrow \widehat{\mathcal{A}}$ satisfying the following rules:
\begin{enumerate}
\item For self crossings $c$ the following relation holds: 
\begin{equation}\label{selfConwayrel}
\widehat{W}(L_{+}^{c}) = \widehat{W}(L_{-}^{c}) \circ \widehat{W}(L_{0}^{c}).
\end{equation}
\item For mixed crossings $c$ the following relation holds: 
\begin{equation}\label{mixedConwayrel}
\widehat{W}(L_{+}^{c}) = \widehat{W}(L_{-}^{c}) * \widehat{W}(L_{0}^{c}).
\end{equation}
\item Let $T_{n}$ be a trivial link of $n$ components. Then
\begin{equation}
\widehat{W}(T_{n}) = a_{n}.
\end{equation}
\end{enumerate}
We call $\widehat{W}$ {\it a generalized Conway type invariant valued in $(\widehat{\mathcal{A}}, \circ,/,*,//,\{a_{n}\}_{n=1}^{\infty})$.}
\end{thm}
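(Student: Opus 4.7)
The plan is to follow the inductive construction of Przytycki--Traczyk from \cite{PrzytyskiTraczyk}, suitably adapted to distinguish between self-crossings and mixed crossings, to which the two different binary operations $\circ$ and $*$ are to be applied.

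First, I would fix, for each oriented link diagram $D$ with components $K_{1},\dots,K_{n}$, an ordering of the components together with a base point on each. Walking along the components in the chosen order starting from the base points, every crossing of $D$ is visited twice, and I call a crossing \emph{good} if the first visit is along the overpass and \emph{bad} otherwise. A diagram with no bad crossings (a descending diagram) represents the trivial link $T_{n}$, and on such a diagram I set $\widehat{W}(D) = a_{n}$. For a general diagram, I define $\widehat{W}(D)$ by induction on the number of bad crossings: choose a bad crossing $c$, apply skein relation \eqref{selfConwayrel} if $c$ is a self-crossing, or \eqref{mixedConwayrel} if $c$ is mixed, and recurse on the two simpler diagrams appearing on the right-hand side. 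Axiom (A) ensures that $\circ$ and $*$ are invertible by $/$ and $//$, respectively, so that each skein relation can be solved for whichever of the two crossing values is unknown.

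Next I would verify that this definition is independent of the auxiliary data and descends to the equivalence classes in $\mathcal{L}$. Independence of the choice of which bad crossing to resolve first is the combinatorial core of the argument: given two bad crossings $c_{1}$ and $c_{2}$, one computes $\widehat{W}(D)$ by resolving $c_{1}$ first and then $c_{2}$, and vice versa, and then compares. Depending on whether each of $c_{1}, c_{2}$ is self or mixed, the equality of the two expressions amounts to exactly one of axioms (C)--(G), with (G) providing the essential compatibility between $\circ$ and $*$ when one resolution order exchanges a self-crossing with a mixed crossing. Independence of the choice of base points reduces, by moving a single base point past one crossing at a time, to an elementary computation that in turn relies on axioms (A), (B) together with the resolution-order independence already established.

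Finally, invariance under the Reidemeister moves must be checked. Invariance under R1 follows from axiom (B): a curl on the component currently labelled $n$ evaluates, via \eqref{selfConwayrel}, to $a_{n} \circ a_{n+1} = a_{n}$. Invariance under R2 reduces directly to the invertibility encoded in axiom (A). The main obstacle, and the lengthiest step, is invariance under R3: I would enumerate all configurations of the three strands involved (belonging to one, two, or three components, with every combination of orientations and of self/mixed crossing types), reduce both sides of the R3 move via the two skein relations to expressions in $\widehat{\mathcal{A}}$, and in each case recognise the resulting identity as one of the exchange axioms (C)--(G). Matching each R3 configuration to the correct axiom is the key technical point, and is precisely the reason the algebra must carry all five exchange axioms simultaneously.
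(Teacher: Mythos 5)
Your proposal follows essentially the same route as the paper, which only sketches the construction (base points, good/bad crossings, induction on the number of bad crossings with $\circ$ for self crossings and $*$ for mixed crossings, and $a_{n}$ on descending diagrams) and defers the full verification to \cite{Kim}. Your additional outline of well-definedness and Reidemeister invariance correctly identifies the roles of axioms (A), (B) and the exchange axioms (C)--(G), matching the standard Przytycki--Traczyk argument that the cited proof adapts.
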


\textbf{Construction of $\widehat{W}$.} First we will define $\widehat{W}$ for ordered oriented link diagrams. Let $L = L_{1} \cup \cdots \cup L_{r}$ be an ordered oriented link diagram of $r$ components. Fix a base point $b_{i}$ on each component $L_{i}$. Suppose that we walk along the diagram $L_{1}$ according to the orientation from the base point $b_{1}$ to itself, then we walk along the diagram $L_{2}$ from the base point $b_{2}$ to itself and so on. If we pass a crossing $c$ first along the undercrossing(or overcrossing), we call $c$ {\it a bad crossing}(or {\it a good crossing}) with respect to the base points $b = \{b_{1}, \cdots, b_{r}\}$. Now we perform the crossing change or splicing for all bad crossings. Denote the value of  $\widehat{W}$ for $L$ corresponding to the base points $b$ by $\widehat{W}_{b}(L)$. Suppose that we meet the first bad crossing $c$. If it is a self crossing, we apply the skein relation on $c$ with the following property:
\begin{equation*}
\widehat{W}_{b}(L_{+}^{c}) = \widehat{W}_{b}(L_{-}^{c}) \circ \widehat{W}_{\tilde{b}}(L_{0}^{c}),
\end{equation*}
where $\tilde{b} = \{b_{1}, \cdots, b_{j-1},b_{j},b'_{j},b_{j+1}, \cdots, b_{n},b_{n+1}\}$ and $b'_{j}$ is a chosen base point near the place of the crossing $c$ on the component, which is appeared by splicing the self crossing $c$ of $L_{j}$, see~Fig.~\ref{1-2splicing}.
\begin{figure}[h!]
\begin{center}
 \includegraphics[width = 8cm]{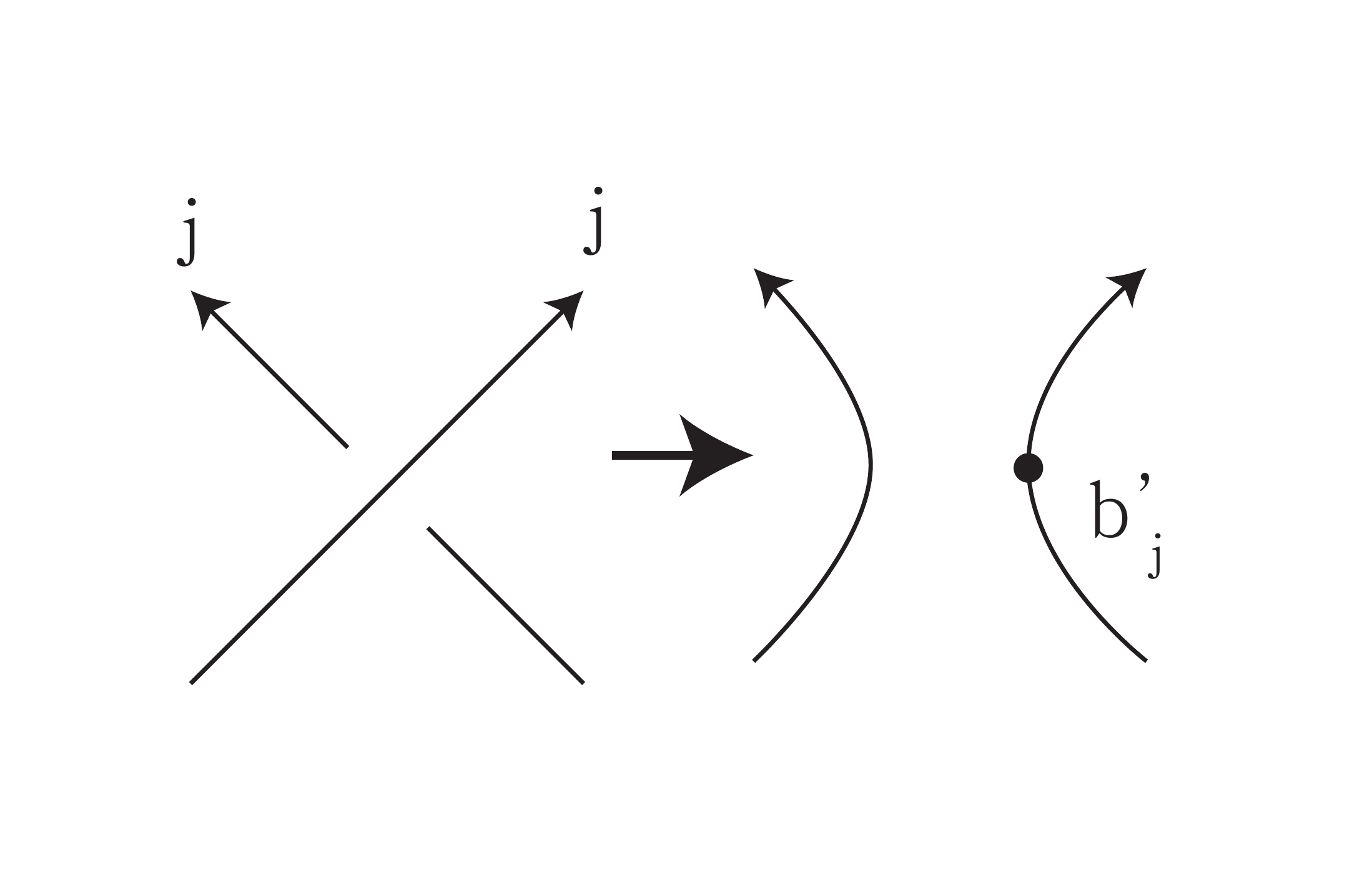}

\end{center}

 \caption{The choice of a base point $b'_{j}$}\label{1-2splicing}
\end{figure}

If it is a mixed crossing between two components $L_{i}$ and $L_{j}$, we apply the skein relation on $c$ with the following property:
\begin{equation*}
\widehat{W}_{b}(L_{+}^{c}) = \widehat{W}_{b}(L_{-}^{c}) *\widehat{W}_{\tilde{b}}(L_{0}^{c}),
\end{equation*}
where $\tilde{b} = \{b_{1}, \cdots, b_{j-1},b_{j+1}, \cdots, b_{n}\}$.
As an abuse of notation we write $\tilde{b} = b$, if it does not cause confusion. Notice that if $c$ is a positive crossing, then the number of bad crossings of $L_{-}^{c}$ is less than the number of bad crossings of $L_{+}^{c}$ and the number of crossings of $L_{0}^{c}$ is less than the number of crossings of $L_{+}^{c}$. We apply those relations to the first bad crossings of $L_{-}^{c}$ and $L_{0}^{c}$ inductively until we switch all bad crossings. If $c$ is a negative crossing, then we apply those relations to the first bad crossings of $L_{+}^{c}$ and $L_{0}^{c}$ inductively until we switch all bad crossings.
 If $L = L_{1} \cup \cdots \cup L_{r}$ has no bad crossings, then we define $\widehat{W}_{b}(L) = a_{n}$.

\begin{exa}\label{exa_gen_Conway}
Let $\widehat{\mathcal{A}} = \mathbb{Z} [p^{\pm 1 }, q^{\pm 1}, r]$. Define the binary operations $\circ,*,/$ and $//$ by
\begin{eqnarray*}
a \circ b = pa + qb,& a / b = p^{-1} a - p^{-1}qb,\\
a * b = pa + rb,& a // b = p^{-1} a - p^{-1}rb.
\end{eqnarray*}
Denote $a_{n} = (\frac{1-p}{q})^{n-1}$ for each $n$. Then $(\widehat{\mathcal{A}}, \circ,/,*, //,\{a_{n}\}_{n=1}^{\infty})$ is a generalized Conway algebra.
\end{exa}

\begin{exa}\label{exa_gen_Homflypt}
Let $\widehat{\mathcal{A}} = Z[v^{\pm 1}, z, w^{\pm 1}]$ be an algebra. Define binary operations $\circ,/,*$ and $//$ by
\begin{eqnarray*}
a\circ b = v^{2}a+vwb,& a/b = v^{-2}a-v^{-1}wb,\\ 
a* b = v^{2}a+vzb,& a//b = v^{-2}a-v^{-1}zb.
\end{eqnarray*}
Put $a_{n} = (\frac{v^{-1}-v}{w})^{n-1}.$ Then $(\widehat{\mathcal{A}},\circ,/,*,//,\{a_{n}\}_{n=1}^{\infty})$ is a generalized Conway algebra. In fact, this is obtained from the generalized Conway algebra in example~\ref{exa_gen_Conway} by substituting $p=v^{2}$, $q = vw$ and $r = vz$. Moreover if $w=z$, then the Conway type invariant valued in the generalized Conway algebra $(\widehat{\mathcal{A}},\circ,/,*,//,\{a_{n}\}_{n=1}^{\infty})$ is the Homflypt polynomial.
\end{exa}

\begin{exa}\
Let $(\widehat{\mathcal{A}},\circ,/,*,//,\{a_{n}\}_{n=1}^{\infty})$ be the generalized Conway algebra in the previous example. It is well known that two links $L_{1} = L11n418\{0,0\}$ and $L_{2} = L11n358\{0,1\}$ have the same value of Homflypt polynomial. The values of Conway type invariant valued in $(\widehat{\mathcal{A}},\circ,/,*,//,\{a_{n}\}_{n=1}^{\infty})$ from $L_{1}$ and $L_{2}$ are calculated as follow.
\begin{eqnarray*} 
\widehat{W}(L_{1}) &=& \frac{1}{p^3 q^2}-\frac{2}{p^2 q^2}+\frac{1}{p q^2}+\frac{r}{q}+\frac{r}{p^4 q}
-\frac{6 r}{p^3 q}+\frac{7 r}{p^2 q}-\frac{3 r}{p q}-\frac{4r^2}{p^4}+\frac{9 r^2}{p^3}\\
&&-\frac{4 r^2}{p^2}+\frac{r^2}{p}+\frac{q r^2}{p^3}-\frac{q r^3}{p^5}+\frac{5 q r^3}{p^4}-\frac{2 q r^3}{p^3}+\frac{q^2
r^4}{p^5}
\end{eqnarray*}
\begin{eqnarray*} \widehat{W}(L_{2}) &=&\frac{1}{p^3 q^2}-\frac{2}{p^2 q^2}+\frac{1}{p q^2}-\frac{r}{p^5 q}+\frac{4 r}{p^4 q}-\frac{8 r}{p^3 q}+\frac{5 r}{p^2 q}\\
&&+\frac{3 r^2}{p^5}-\frac{8r^2}{p^4}+\frac{8 r^2}{p^3}-\frac{r^2}{p^2}-\frac{3 q r^3}{p^5}+\frac{4 q r^3}{p^4}-\frac{q r^3}{p^3}+\frac{q^2 r^4}{p^5}
\end{eqnarray*}

\begin{figure}[h!]
\begin{center}
 \includegraphics[width = 8cm]{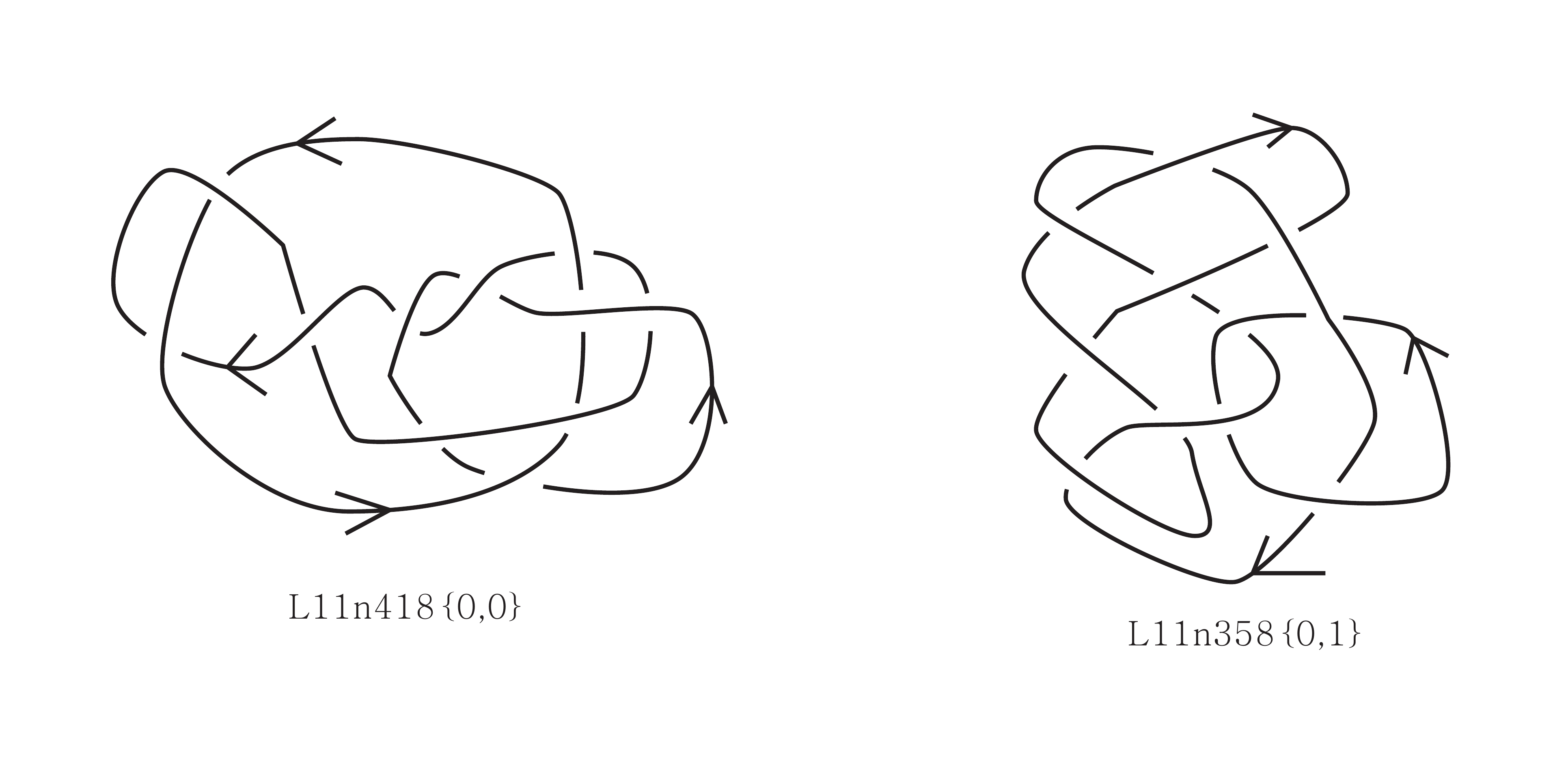}

\end{center}

 \caption{Diagrams $L_{1} = L11n418\{0,0\}$ and $L_{2} = L11n358\{0,1\}$}\label{exa}

\end{figure}
That is, $\widehat{W}(L_{1}) \neq \widehat{W}(L_{2})$ and the generalized Conway type invariant is stronger than Homflypt polynomial.
\end{exa}
\begin{exa}\label{exa_non_linear_gen_Conway}
Let us fix a natural number $k$. Let $\widehat{\mathcal{A}}$ be the smallest commutative ring with identity containing $\mathbb{Z}[p^{\pm1},q^{\pm1},r^{\pm1}]$ such that $\sqrt[k]{f} \in \widehat{\mathcal{A}}$ for each $f \in \widehat{\mathcal{A}}$, where $\sqrt[k]{f}$ is the formal $k$-th root, that is, $(\sqrt[k]{f})^{k} = \sqrt[k]{f^{k}} =f$. Define binary operations $\circ,/,*$ and $//$ by
\begin{eqnarray*}
a\circ b = \sqrt[k]{pa^{k}+qb^{k}}, & a/b =  \sqrt[k]{p^{-1}a^{k}-p^{-1}qb^{k}} \\
a * b = \sqrt[k]{pa^{k}+rb^{k}}, & a//b =  \sqrt[k]{p^{-1}a^{k}-p^{-1}rb^{k}},
\end{eqnarray*}
for $a,b \in \widehat{\mathcal{A}}$. Let $\{a_{n}\}$ be the sequence defined by the following recurrence relation
$$ a_{1} = 1, a_{n+1}^{k} = \frac{(1-p)}{q}a_{n}^{k}.$$
We can show that $(\widehat{\mathcal{A}}, \circ,/,*, //,\{a_{n}\}_{n=1}^{\infty})$ is a generalized Conway algebra.
\end{exa}
%\begin{lem}
%Invariants valued in generalized Conway algebras cannot distinguish mutants.
%\end{lem}

%\begin{proof}[Sketch of proof]
%\begin{itemize}
%\item By crossing changes 2-2 tangles without unicursal circles can be deformed to one of 4 tangles in Fig.~\ref{basic_2-2_tangles}.
%\begin{figure}[h!]
%\begin{center}
% \includegraphics[width = 12cm]{basic_2-2_tangles.pdf}

%\end{center}

% \caption{Basic 2-2 tangles}\label{basic_2-2_tangles}

%\end{figure}

%\item By induction on the number of crossings of 2-2 tangles we can show that our invariant cannot distinguish mutants.
%\end{itemize}

%\end{proof}

\begin{lem}
The generalized Conway type invariant valued in the generalized Conway algebra in example~\ref{exa_gen_Homflypt} weaker than two variable Vassiliev invariant.
\end{lem}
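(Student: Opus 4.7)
The plan is to recast the invariant $\widehat{W}$ as a formal power series in an auxiliary parameter $h$ so that the graded pieces of its expansion are Vassiliev-type invariants in two variables. I would introduce a formal parameter $h$ and substitute
\begin{equation*}
v = e^{xh/2}, \qquad w = \hat{w}\, h, \qquad z = \hat{z}\, h,
\end{equation*}
where $x, \hat{w}, \hat{z}$ are new indeterminates. Since $(v^{-1}-v)/w = -2\sinh(xh/2)/(\hat{w}h) = -x/\hat{w}+O(h^2)$, the substituted trivial link values $a_n = ((v^{-1}-v)/w)^{n-1}$ lie in $\mathbb{Z}[x,\hat{w}^{\pm 1},\hat{z}][[h]]$, and hence the substitution is well defined on the whole invariant.

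Rewriting the two skein relations of Example~\ref{exa_gen_Homflypt} as differences yields, for a self crossing $c$,
\begin{equation*}
\widehat{W}(L_+^c) - \widehat{W}(L_-^c) = (v^2-1)\,\widehat{W}(L_-^c) + vw\,\widehat{W}(L_0^c) = h\bigl(x\,\widehat{W}(L_-^c) + \hat{w}\,\widehat{W}(L_0^c)\bigr) + O(h^2),
\end{equation*}
and the analogous formula with $\hat{z}$ in place of $\hat{w}$ for a mixed crossing. Thus on the Vassiliev extension to singular links, resolving a self double point (resp.\ mixed double point) contributes a factor of $h$ together with at most one extra factor of $\hat{w}$ (resp.\ $\hat{z}$). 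By induction on the number of double points, the extension of $\widehat{W}$ to a singular link with $m$ self double points and $n$ mixed double points is divisible by $h^{m+n}$, with leading term a polynomial of bi-degree at most $(m,n)$ in $(\hat{w},\hat{z})$. Consequently the coefficient of $h^{m+n}\hat{w}^m\hat{z}^n$ in the $h$-expansion of $\widehat{W}$ is a Vassiliev invariant of bi-degree $(m,n)$, vanishing on singular links with strictly more than $m$ self double points or more than $n$ mixed double points.

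Finally one observes that the coefficients of the $h$-expansion recover the original polynomial $\widehat{W}$ (the substitution is injective because $x,\hat{w},\hat{z}$ are free), so two links with the same two-variable Vassiliev invariants must agree on $\widehat{W}$, as claimed. The main obstacle I anticipate is verifying that the recursive construction from Theorem~\ref{Main_thm} — which depends on a choice of base points and of a resolution order of bad crossings — is compatible with the $h$-filtration after substitution. This is essentially a bookkeeping step: one has to confirm inductively that the contributions of the sequence $\{a_n\}$ at each recursive step respect the claimed bi-grading, so that the vanishing on singular links with excess double points survives to the top of the recursion.
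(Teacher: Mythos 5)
Your proposal is correct in substance and follows the same basic strategy as the paper --- substitute exponentials for the algebra parameters so that the skein differences become divisible by a formal parameter, and read off finite-type invariants from the coefficients of the resulting power series --- but the two proofs implement the two-variable grading differently. The paper substitutes $v=e^{x}e^{y}e^{z}$, $w=e^{-x}-e^{x}$, $z=e^{-y}-e^{y}$ and asserts directly that the self-crossing difference equals $x\,f(x,y,z)$ and the mixed-crossing difference equals $y\,g(x,y,z)$, so that the extension to singular links with $m$ self and $n$ mixed double points is divisible by $x^{m}y^{n}$ and the bi-grading is immediate. You instead use a single deformation parameter $h$ and recover the bi-grading by counting powers of $\hat w$ and $\hat z$. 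This works, but the step you label ``bookkeeping'' is genuinely the crux and should be spelled out: the branch $(v^{2}-1)\widehat{W}(L_{-}^{c})$ of a resolved double point contributes an $h$ accompanied by an $x$ but by no $\hat w$ or $\hat z$, and the values $a_{n}$ contribute negative powers of $\hat w$ already at order $h^{0}$, so the leading coefficient of a singular link with $(m',n')$ double points is not literally of bi-degree $(m',n')$. The saving observation is that under your substitution every positive power of $\hat w$ (resp.\ $\hat z$) is locked to exactly one power of $h$, so extracting the coefficient of $h^{m+n}\hat w^{m}\hat z^{n}$ forces each self (resp.\ mixed) double point to take the $vw$ (resp.\ $vz$) branch and hence forces $m'\le m$ and $n'\le n$; once that count is recorded your argument closes. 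In exchange, your route handles the term $v^{2}-1$ more cleanly than the paper's: with two independent deformation variables one must arrange that $v^{2}-1$ be divisible by the variable attached to whichever crossing type is being resolved, which the substitution $v=e^{x}e^{y}e^{z}$ does not obviously accomplish, whereas divisibility by your single $h$ is automatic.
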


\begin{proof}
Let $\widehat{\mathcal{A}} = Z[v^{\pm 1}, z, w^{\pm 1}]$ be an algebra with binary operations $\circ,/,*$ and $//$  defined by
\begin{eqnarray*}
a\circ b = v^{2}a+vwb,& a/b = v^{-2}a-v^{-1}wb,\\ 
a* b = v^{2}a+vzb,& a//b = v^{-2}a-v^{-1}zb.
\end{eqnarray*}
and $a_{n} = (\frac{v^{-1}-v}{w})^{n-1}.$
By substituting $v = e^{x}e^{y}e^{z}$, $w=e^{-x}-e^{x}$ and $z=e^{-y}-e^{y}$ we obtain that
$$\widehat{W}(L^{C}_{+}) - \widehat{W}(L^{C}_{-}) = xf(x,y,z),$$
for a self crossing $C$ and
$$ \widehat{W}(L^{C}_{+}) - \widehat{W}(L^{C}_{-}) = yg(x,y,z)$$
for a mixed crossing $C$, 
therefore the generalized Conway type invariant is weaker than two variable Vassiliev invariant.
\end{proof}
\section{Further research}
When we define the Conway type invariant, we applied two different skein relations to self crossings and mixed crossings.	
It is well known that the division of crossings to {\sl self/mixed crossings} is a parity, which is introduced by V.O.Manturov in \cite{Ma}. We would like to generalized Conway type invariant by applying two different skein relations to even crossings and odd crossings. On the above purpose we define {\bf the link parity} as follow.
\begin{dfn}
A link parity $p$ on diagrams of a link $\mathcal{L}$ with coefficients in $\mathbb{Z}_{2}$ is a family of maps $p_{L}: \mathcal{V}(L) \rightarrow A,$ $L \in ob(\mathcal{L})$ is an object of the category, such that for any elementary morphism $f : L \rightarrow L'$ the following hols:
\begin{enumerate}
\item $p_{L'}(f_{*}(v)) =p_{L}(v)$ provided that $v \in  \mathcal{V}(L) $ and there exists $f_{*}(v) \in  \mathcal{V}(L')$, if $f$ is a Reidemeister moves;
\item $p_{L}(v_{1}) + p_{L}(v_{2}) = 0$ if f is a decreasing second Reidemeister move and $v_{1}, v_{2}$ are the disappearing crossings;
\item $p_{L}(v_{1}) + p_{L}(v_{2}) + p_{L}(v_{3}) = 0$ if $f$ is a third Reidemeister move and $v_{1}, v_{2}, v_{3}$ are the crossing participating in this move;
\item $p_{L}(f_{*}(v)) = p_{L}(v)$ if $f$ is a splicing of $v'$ and $p_{L}(v) + p_{L}(v') = 1$;
\item If $f$ is a splicing of $v'$, $p_{L}(v) + p_{L}(v') = 0$ and $p_{L}(f_{*}(v)) + p_{L}(v) =1$, then $p_{L}(f'_{*}(v')) + p_{L}(v') =1$, where $f'$ is a splicing of $v$.
 
\end{enumerate}

\end{dfn}
It is easy to show that the division of crossings to self/mixed crossings satisfies the above conditions. But the (link) parity, which gives non-trivial parity for knots, is not known. 
Now we define the descending diagram for virtual link diagrams.

\begin{dfn}
Let $L = L_{1} \cup \cdots \cup L_{n}$ be an ordered oriented virtual link diagram of $n$ components. Fix a point $b_{i}$ on each component $L_{i}$, but it is not a crossings. Now we walk along the component $L_{1}$ from the fixed point $b_{1}$ in the given direction of $L_{1}$, and then we walk along the component $L_{2}$ from the fixed point $b_{2}$ in the given direction of $L_{2}$ and so on. If while walking along the diagram each classical crossing is first passed over and then under, then we call the diagram $L$ {\it descending diagram with respect to $\{b_{i}\}_{i=1}^{n}$ and the order of components}. For an oriented virtual link diagram $L$, if we can give an order of components and fix a point on each component of $L$ for $L$ to be a descending diagram, then we call $L$ is a descending diagram. It is well known that if $L$ is a classical descending link diagram, then it is equivalent to the trivial link diagram.
\end{dfn} 

We would like to define {\it the virtual Conway algebra $(\widetilde{\mathcal{A}}, \circ,/,*, //,\{a_{T}\}_{T \in I} \subset \widetilde{\mathcal{A}})$,} which is a generalization of the generalized Conway algebra $(\widehat{\mathcal{A}}, \circ,/,*, //,\{a_{n}\}_{n \in \mathbb{N}})$ to define invariant for oriented virtual links as follow: \\
Let $\widetilde{\mathcal{L}}$ be the set of equivalence classes of oriented virtual link diagrams modulo generalized Reidemeister moves such that every diagram is equipped by link parity. Let $(\widetilde{\mathcal{A}}, \circ,/,*, //,\{a_{T}\}_{T \in I} \subset \widetilde{\mathcal{A}})$ be a virtual Conway algebra, where $I$ is the set of equivalence classes of all descending virtual link diagrams. We would like to define the invariant of oriented virtual links $\widetilde{W} : \widetilde{\mathcal{L}} \rightarrow \widetilde{\mathcal{A}}$ satisfying the following rules:
\begin{enumerate}
\item For even crossings $c$ the following relation holds: 
\begin{equation}\label{selfConwayrel}
\widetilde{W}(L_{+}^{c}) = \widetilde{W}(L_{-}^{c}) \circ \widetilde{W}(L_{0}^{c}).
\end{equation}
\item For odd crossings $c$ the following relation holds: 
\begin{equation}\label{mixedConwayrel}
\widetilde{W}(L_{+}^{c}) = \widetilde{W}(L_{-}^{c}) * \widetilde{W}(L_{0}^{c}).
\end{equation}
\item Let $L_{T}$ be a virtual link, which has a descending diagram $T$. Then
\begin{equation}
\widetilde{W}(L_{T}) = a_{T}.
\end{equation}
\end{enumerate}
But when we define the virtual Conway algebra, we meet the following difficulty: it is well-known that there are infinitely many nontrivial free links. It follows that there are infinitely many nontrivial descending virtual link diagrams, because by forgetting under/over information and the underlying surface of virtual link diagram, we obtain free link diagram. But we don't know how to list all such virtual link diagrams. Moreover, to define the virtual Conway algebra $(\widetilde{\mathcal{A}}, \circ,/,*, //,\{a_{T}\}_{T \in I} \subset \widetilde{\mathcal{A}})$ we need to find relations 
\begin{equation}\label{needed_rels}
a_{T_{+}} = a_{T_{-}} \circ a_{T_{0}}, a_{T_{+}} = a_{T_{-}} * a_{T_{0}}
\end{equation}

for $T_{*} \in I$, where $T_{+},T_{-}$ and $T_{0}$ are the Conway triple.\\

\textbf{Questions}\\
\begin{enumerate}
\item Is there a (link) parity, which gives non-trivial parity for classical links?
\item How to find the set $\{a_{T}\}_{T \in I}$ and the relations~\ref{needed_rels} for the invariant $\widetilde{W}$ is well-defined.
\end{enumerate}

\end{document}